\newcommand{\Rmnum}[1]{\expandafter\@slowromancap\romannumeral #1@}
\begin{document}

\newtheorem{theorem}{Theorem}
\newtheorem{observation}[theorem]{Observation}
\newtheorem{corollary}[theorem]{Corollary}
\newtheorem{algorithm}[theorem]{Algorithm}
\newtheorem{definition}[theorem]{Definition}
\newtheorem{guess}[theorem]{Conjecture}
\newtheorem{claim}[theorem]{Claim}
\newtheorem{problem}[theorem]{Problem}
\newtheorem{question}[theorem]{Question}
\newtheorem{lemma}[theorem]{Lemma}
\newtheorem{proposition}[theorem]{Proposition}
\newtheorem{fact}[theorem]{Fact}

\makeatletter
  \newcommand\figcaption{\def\@captype{figure}\caption}
  \newcommand\tabcaption{\def\@captype{table}\caption}
\makeatother

\newtheorem{acknowledgement}[theorem]{Acknowledgement}

\newtheorem{axiom}[theorem]{Axiom}
\newtheorem{case}[theorem]{Case}
\newtheorem{conclusion}[theorem]{Conclusion}
\newtheorem{condition}[theorem]{Condition}
\newtheorem{conjecture}[theorem]{Conjecture}
\newtheorem{criterion}[theorem]{Criterion}
\newtheorem{example}[theorem]{Example}
\newtheorem{exercise}[theorem]{Exercise}
\newtheorem{notation}{Notation}
\newtheorem{solution}[theorem]{Solution}
\newtheorem{summary}[theorem]{Summary}

\newenvironment{proof}{\noindent {\bf
Proof.}}{\rule{3mm}{3mm}\par\medskip}
\newcommand{\remark}{\medskip\par\noindent {\bf Remark.~~}}
\newcommand{\pp}{{\it p.}}
\newcommand{\de}{\em}
\newcommand{\mad}{\rm mad}
\newcommand{\qf}{Q({\cal F},s)}
\newcommand{\qff}{Q({\cal F}',s)}
\newcommand{\qfff}{Q({\cal F}'',s)}
\newcommand{\f}{{\cal F}}
\newcommand{\ff}{{\cal F}'}
\newcommand{\fff}{{\cal F}''}
\newcommand{\fs}{{\cal F},s}
\newcommand{\s}{\mathcal{S}}
\newcommand{\G}{\Gamma}
\newcommand{\g}{(G_3, L_{f_3})}
\newcommand{\wrt}{with respect to }
\newcommand {\nk}{ Nim$_{\rm{k}} $  }

\newcommand{\q}{\uppercase\expandafter{\romannumeral1}}
\newcommand{\qq}{\uppercase\expandafter{\romannumeral2}}
\newcommand{\qqq}{\uppercase\expandafter{\romannumeral3}}
\newcommand{\qqqq}{\uppercase\expandafter{\romannumeral4}}
\newcommand{\qqqqq}{\uppercase\expandafter{\romannumeral5}}
\newcommand{\qqqqqq}{\uppercase\expandafter{\romannumeral6}}

\newcommand{\qed}{\hfill\rule{0.5em}{0.809em}}

\newcommand{\var}{\vartriangle}

\title{{\large \bf  Multiple list colouring triangle free planar graphs}}

\author{   Yiting Jiang \and Xuding Zhu\thanks{Department of Mathematics, Zhejiang Normal University,  China.  E-mail: xudingzhu@gmail.com. Grant Number: NSFC 11571319, ZJNSF LD19A010001.}}

\maketitle

\begin{abstract}
	This paper proves that for each positive integer $m$, there is a triangle free planar graph $G$ which is not $(3m+ \lceil \frac m{17} \rceil-1, m)$-choosable.

\bigskip
	
\noindent {\bf Keywords:}
fractional choice number, multiple list colouring, triangle free planar graphs, strong fractional choice number.
\end{abstract}

\section{Introduction}

Colouring of triangle free planar graphs has been studied extensively in the literature. It was proved by Gr\"{o}tzsch \cite{Grotzsch} that every triangle free planar graph is $3$-colourable. On the other hand, Voigt \cite{n3MV}  showed that there are triangle free planar graphs that are not $3$-choosable. 
In this paper, we are interested in multiple list colouring of triangle free planar graphs. 

A $b$-fold colouring of a graph   $G$ is a mapping $\phi$ which assigns to each vertex $v$ of $G$ a set $\phi (v)$ of $b$ colours, so that adjacent vertices receive disjoint colour sets. An $(a,b)$-colourig of $G$ is a $b$-fold colouring $\phi$ of $G$ such that $\phi(v) \subseteq \{1,2,\cdots,a\}$ for each vertex $v$. The {\em fractional chromatic number} of $G$ is 
$$\chi_f(G)=inf\{\frac a b :G \ is \ (a,b)-colourable\}.$$
An $a$-list assignment of $G$ is a mapping $L$ which assigns to each vertex $v$ a set $L(v)$ of $a$ permissible colours. A $b$-fold $L$-colouring of $G$ is a $b$-fold colouring $\phi$ of $G$ such that $\phi(v)\subseteq L(v)$ for each vertex $v$. We say $G$ is $(a,b)$-choosable if for any $a$-list assignment $L$ of $G$, there is a $b$-fold $L$-colouring of $G$.
The {\em choice number} $ch(G)$ of $G$ is 
$$ch(G)=\min \{  a:G \ is \ (a,1)-choosable\}.$$
  The {\em fractional choice number} of $G$ is $$ch_f(G)=inf\{\frac a b :G \ is \ (a,b)-choosable\}.$$
It was proved by Alon, Tuza and Voigt \cite{Chf} that for any finite graph $G$, $\chi_f(G)=ch_f(G)$ and moreover the infimum in the definition of $ch_f(G)$ is attained and hence can be replaced by minimum. 
This implies that if $G$ is $(a,b)$-colourable, then for some interger $m$, $G$ is $(am,bm)$-choosable. 

Recently, it was shown by Dvo\v{r}\'{a}k,   Sereni and   Volec \cite{DSV} that an $n$-vertex triangle free planar graph $G$ has fractional chromatic number at most $\frac{9n}{3n+1}$. 
Therefore, for each  triangle free planar graph $G$, there is an integer $m$ such that $G$ is $(3m-1,m)$-choosable.  

A natural question is whether there is a constant $m$ such that every triangle free planar graph $G$ is $(3m,m)$-choosable. If not, what is the smallest real number $\epsilon$ such that every triangle free planar graph $G$ is $(\lceil (3+\epsilon) m \rceil,m)$-choosable.

For a positive real number $\alpha$, we say a graph $G$ is {\em strongly fractional $\alpha$-choosable} if for any positive integer $m$, $G$ is $(\lceil \alpha m \rceil, m)$-choosable. We define the {\em strong fractional choice number} $ch_f^*(G)$ of $G$ as 
$$ch_f^*(G) = \inf \{\alpha: \text{$G$ is strongly fractional $\alpha$-choosable}\}.$$
The strong fractional choice number of a family ${\cal G}$ of graphs is defined as 
$$ch_f^*({\cal G}) = \sup \{ch_f^*(G): G \in {\cal G}\}.$$  
We are interested in the strong fractional choice number of the class of triangle free planar graphs. 

The strong fractional choice number of planar graphs was studied in \cite{Zhu}, where it was shown that for each positive integer $m$, there is a planar graph $G$ which is not $(4m+ \lfloor  \frac {2m-1}{9} \rfloor, m)$-choosable. Let ${\cal P}$ be the class of planar graphs. Then we have $ch_f^*({\cal P}) \ge 4+ \frac 29.$

In this paper, we prove the following result. Let ${\cal Q}$ be the family of triangle free planar graphs.

\begin{theorem}
	\label{main}
	For each positive integer $m$, there is a triangle free planar graph $G$ which is not $(3m+ \lceil  \frac m{17} \rceil -1, m)$-choosable.
	Consequently, $ch_f^*({\cal  Q}) \ge 3+ \frac 1{17}$.
\end{theorem}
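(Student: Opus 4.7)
The plan is to produce, for each positive integer $m$, an explicit triangle-free planar graph $G$ together with an $a$-list assignment $L$ (where $a = 3m + \lceil m/17 \rceil - 1$) admitting no $m$-fold $L$-colouring. I would follow the gadget-and-amplification strategy used by Zhu in \cite{Zhu} to establish $ch_f^*({\cal P}) \ge 4 + \frac{2}{9}$, adapting the underlying constructions to the triangle-free regime by systematically replacing short odd structures with their bipartite subdivisions.

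The first step is to build a triangle-free planar \emph{gadget} $H$ with a small boundary (for instance a single ``output'' vertex $v$, or a pair of such vertices) together with a list assignment on the interior of $H$, such that for every $a$-list at $v$, any $m$-fold $L$-colouring of $H$ is forced to draw the colours it puts on $v$ from a proper subfamily of $L(v)$. The natural template is a bipartite analogue of the classical Voigt construction establishing non-$3$-choosability of triangle-free planar graphs, reworked for $b$-fold colouring: the interior lists are engineered so that a prescribed pool of colours at $v$ must be ``consumed'' by the interior. Since in the triangle-free setting common neighbours of two vertices can only be created through $4$-cycles, $H$ will naturally be assembled from book-like stacks of $4$-cycles or from theta-graphs rather than from triangles sharing an edge.

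The second step is to combine many copies of $H$ in a triangle-free planar fashion, identifying the output of one copy with an input of the next (or pasting them along a common boundary), so that the ``forbidden'' colour-sets at successive boundaries accumulate to exceed $a-m$. At that point some boundary vertex has fewer than $m$ colours available in its list, which is the desired contradiction. The constant $\frac{1}{17}$ is expected to fall out of optimizing the trade-off between the forcing strength of a single copy of $H$ and the maximum number of copies that can be planarly combined without introducing a triangle; concretely, if each copy forces a loss of $\ell$ colours while consuming at most a fixed amount of list-length, then the amplification bound $N\ell > a - m$ together with the planarity/triangle-free budget yields the threshold $3 + \frac{1}{17}$.

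The principal obstacle will be designing a gadget that simultaneously achieves the required colour-loss ratio, remains triangle-free under iterated pasting, and admits a planar embedding of arbitrarily many copies. Many of the natural constructions used in \cite{Zhu} exploit triangles and short odd cycles and so are unavailable here, forcing the use of longer subdivision paths and bipartite doublings that reduce list-size efficiency. The remaining bookkeeping is a Hall-type or pigeonhole argument on how the colours of the interior lists distribute across the copies; this is the step that must be executed carefully to certify that $\lceil m/17 \rceil - 1$ extra colours beyond $3m$ are still insufficient, pinning down the exact denominator $17$ rather than a weaker constant.
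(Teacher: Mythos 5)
Your proposal is a plan rather than a proof: the two items that carry all the mathematical content --- the explicit triangle-free planar gadget and the counting argument that produces the constant $17$ --- are both left as things that are ``expected to fall out'' of an optimization you do not perform. The paper's proof is precisely the execution of these two steps. Its gadget $H$ has two distinguished vertices $u,v$ with prescribed disjoint $m$-sets $A,B$, three vertices $x_1,x_2,x_3$ and a vertex $w$, and two $5$-cycles $y_1\dots y_5$ and $z_1\dots z_5$; the lists are built from disjoint pools $A,B,C,D,E$ with $|C|=m$, $|D|=2m$, $|E|=\epsilon m$ where $\epsilon m=\lceil m/17\rceil-1$. The contradiction comes from repeatedly using the fact that on a $5$-cycle each colour is used at most twice, which forces at least $(1-4\epsilon)m$ colours of $A$ onto $y_3$, then at least $(1-5\epsilon)m$ colours of $B$ onto $x_2$, then at least $(1-12\epsilon)m$ colours of $C$ onto $z_4$, and finally leaves $w$ with at most $17\epsilon m<m$ usable colours. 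None of this is present or even sketched in your write-up, and without it the claim that $\lceil m/17\rceil-1$ extra colours are insufficient is unsupported; indeed the paper notes that $\lceil m/17\rceil$ extra colours \emph{do} suffice for this gadget, so the bound is tight and cannot be obtained by a loose pigeonhole estimate.

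Your amplification step also differs from what is actually needed, and in a way that would fail. You propose chaining copies serially so that ``forbidden colour-sets at successive boundaries accumulate,'' but the gadget does not shrink the effective list at $v$; for a fixed pair of $m$-subsets $(A,B)$ of the lists at $u$ and $v$ it merely blocks that one pair from extending. The correct composition is parallel: with $p=\binom{3m+\epsilon m}{m}$, take $p^2$ copies of $H$, one for each possible pair $(\phi(u),\phi(v))$, identify all copies of $u$ and all copies of $v$, and give the copy associated with $(\phi(u),\phi(v))$ the interior lists of the lemma with $A=\phi(u)$, $B=\phi(v)$. Then every $m$-fold colouring of $u$ and $v$ is killed by its own copy. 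This identification of two vertices across many copies preserves planarity and creates no triangle since $u$ and $v$ are nonadjacent and have no common neighbour in $H$. To turn your proposal into a proof you would need to supply the gadget, the lists, and the double-counting on the two $5$-cycles; as written, the key lemma is missing entirely.
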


The $m=1$ case of Theorem 1 is equivalent to say that there are non-$3$-choosable triangle free planar graphs, which was proved by Voigt \cite{n3MV}. Some other non-$3$-choosable triangle free planar graphs were constructed by Montassier \cite{n3MM} and by Glebov, Kostochka and Tashkinov \cite{smn3}.  

\section{The proof of Theorem \ref{main}}

In this section, $m$ is an arbitrary but fixed positive integer. We shall construct a triangle free planar graph $G$ which is not $(3m+ \lceil \frac m{17} \rceil-1, m)$-choosable.

\begin{figure}[!htp]
	\begin{center}
		\includegraphics[scale=0.5]{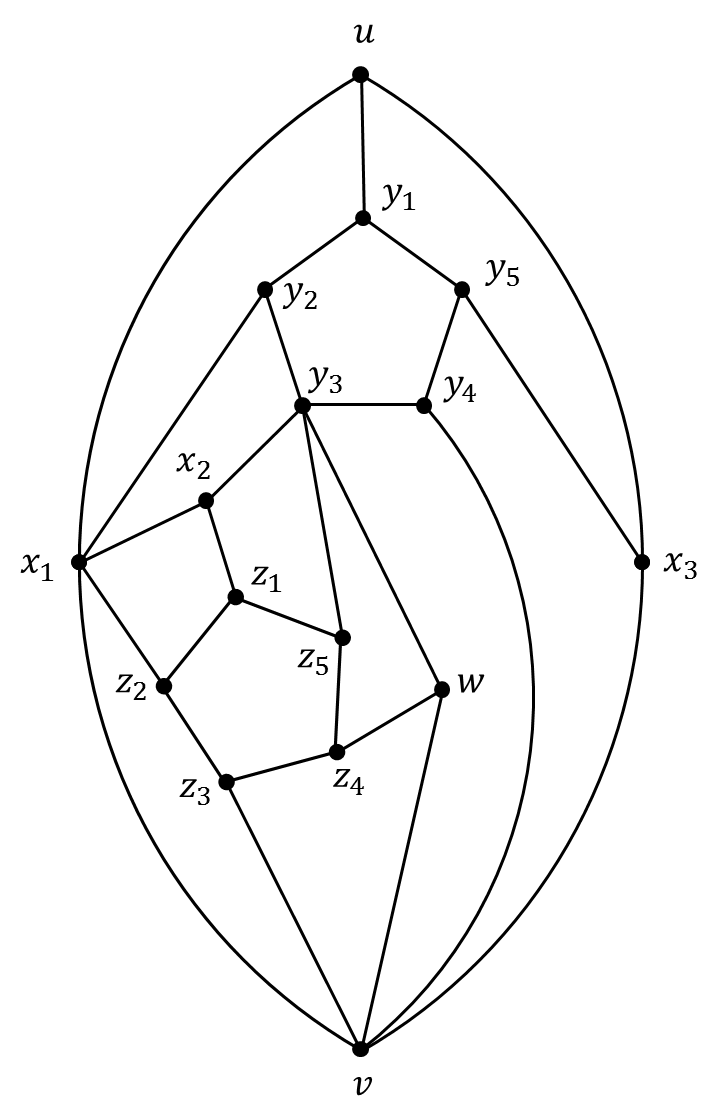}
	\end{center}
	\caption{The graph $H$}
\end{figure}

\begin{lemma} 
	\label{lem1}
	Let $H$ be the graph as shown in Figure 1. Let $\epsilon$ be the real number such that $\epsilon m = \lceil \frac m{17} \rceil-1 $. Let $A, B$ be disjoint sets of $m$ colours. 
	Then there is a list assignment $L$ of $H$ for which the following hold:
 \begin{itemize}
 		\item[1] $L(u)=A$ and $L(v)=B$.
 	\item[2]  $|L(s)|=3m+ \epsilon m$ for each vertex $s \ne u, v$.
 	\item[3]  There is no m-fold $L$-colouring of $H$.
 \end{itemize}	 
\end{lemma}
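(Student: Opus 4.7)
The strategy is to design the list assignment $L$ explicitly and then rule out an $m$-fold $L$-colouring by a supply-versus-demand argument. Let $C$ be a set of fresh colours disjoint from $A\cup B$. For each interior vertex $s$ of $H$, I would choose $L(s)\subseteq A\cup B\cup C$ of size $3m+\epsilon m$ with prescribed intersection sizes $|L(s)\cap A|$, $|L(s)\cap B|$, $|L(s)\cap C|$ depending on the location of $s$ in Figure 1 relative to $u$ and $v$. Vertices ``close'' to $u$ would carry a large piece of $A$, vertices close to $v$ a large piece of $B$, while the auxiliary palette $C$ is distributed across a carefully chosen family of independent sets of $H$.

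Because $|L(u)|=|L(v)|=m$, any $m$-fold $L$-colouring $\phi$ must satisfy $\phi(u)=A$ and $\phi(v)=B$. Consequently, a neighbour $s$ of $u$ must draw $\phi(s)$ from $L(s)\setminus A$, and a neighbour of $v$ must draw from $L(s)\setminus B$. Iterating this truncation along the gadget (using triangle-freeness to keep the propagation clean), one obtains for each interior vertex $s$ a lower bound on $|\phi(s)\cap C|$. Summing these lower bounds gives a demand on the multiplicities of auxiliary colours. On the other hand, for each $c\in C$ the set $\{s:c\in\phi(s)\}$ is an independent set of $H$, which caps the total supply. The contradiction arises when demand exceeds supply, and a careful bookkeeping shows this is exactly what happens as soon as $\epsilon m\le \lceil m/17\rceil-1$.

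The main obstacle is getting the arithmetic to match the constant $\tfrac{1}{17}$ exactly. The graph $H$ in Figure~1 is chosen so that the linear program arising from the truncation/independence count above has extremal ratio $\tfrac{1}{17}$, and designing $L$ so that both the list truncation step and the independence bound are simultaneously tight for this specific $H$ is the technical heart of the lemma. Because $\lceil m/17\rceil-1$ is an integer while the underlying LP is real-valued, a short case analysis in $m\bmod 17$ may be required to confirm strict failure for every $m$. The special case $m=1$ makes $\epsilon m=0$ and reduces the statement to the existence of a non-$3$-choosable triangle-free planar graph with two prescribed colours on $u,v$, which is consistent with Voigt's construction cited in the introduction and serves as a useful sanity check on the design of $H$.
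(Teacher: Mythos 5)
Your proposal outlines a plausible general strategy---an explicit list assignment drawn from $A\cup B$ plus auxiliary colours, followed by a counting argument that exploits the small independence number of parts of $H$---and this is indeed the spirit of the paper's proof. But as written it has a genuine gap: you never construct the list assignment, never state the intersection sizes $|L(s)\cap A|$, $|L(s)\cap B|$, $|L(s)\cap C|$, and never carry out the bookkeeping. You explicitly defer ``the technical heart of the lemma'' (making the truncation and independence bounds simultaneously tight for the specific $H$ of Figure~1), which is precisely the content one has to supply. Without the concrete lists there is nothing to verify, and the claim that ``demand exceeds supply'' for this $H$ is an assertion, not an argument.

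For comparison, the paper's proof uses five pairwise disjoint palettes $A,B,C,D,E$ with $|A|=|B|=|C|=m$, $|D|=2m$, $|E|=\epsilon m$, assigns, e.g., $L(x_i)=L(w)=A\cup B\cup C\cup E$ and gives the two $5$-cycles $y_1,\dots,y_5$ and $z_1,\dots,z_5$ lists built from $D\cup E$ together with one of $A$, $B$, $C$. The contradiction is not obtained by summing a global demand on $C$ over all interior vertices; it is a \emph{sequential propagation}: since each colour appears on at most two vertices of a $5$-cycle and $5m$ colour slots must be filled, at least $(1-4\epsilon)m$ colours of $A$ are forced onto $y_3$; this forces at least $(1-5\epsilon)m$ colours of $B$ onto $x_2$, which in turn forces at least $(1-12\epsilon)m$ colours of $C$ onto $z_4$ via the second $5$-cycle; finally $w$, adjacent to $y_3$, $x_2$'s forcing chain and $z_4$, is left with at most $4\epsilon m+12\epsilon m+\epsilon m=17\epsilon m<m$ usable colours. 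Note also that no case analysis on $m\bmod 17$ is needed: $\epsilon m=\lceil m/17\rceil-1$ gives $\epsilon<\tfrac 1{17}$ for every $m$, which is all the final step requires. Your sketch is compatible with this argument, but until the lists and the chain of inequalities are written down, the lemma is not proved.
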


 \begin{proof} Let $A, B, C, D, E$ be disjoint colour sets, where $|A|=|B|=|C|=m$, $|D|=2m$ and $|E|= \epsilon m$.
 
 Let $L$ be the list assignment of $H$  defined as follows:
 \begin{itemize}
 		\item   $L(u)=A$ and $L(v)=B$.
 	\item $L(x_1)=L(x_2)=L(x_3)=L(w)=A \cup B \cup C \cup E$.
 	\item $L(y_1)=L(y_3)=L(z_5)=A \cup D \cup E$.
 	\item $L(y_4)=L(z_1)=L(z_3)=B \cup D \cup E$.
 	\item $L(y_2)=L(y_5)=L(z_2)=L(z_4)=C \cup D \cup E$.
 \end{itemize}
 
 We shall show that $H$ is not $m$-fold $L$-colourable. 
 Assume to the contrary that $\phi$ is an $m$-fold $L$-colouring of $H$.

The vertices   $y_1,y_2,y_3, y_4, y_5$ induce a $5$-cycle. So each colour in $D \cup E$ can be used at most twice on these five vertices. At least $(1-\epsilon)m$ colours of $C$ are used on vertex $x_3$. Hence at most $\epsilon m$ of colours from $C$ can be used at vertex $y_5$. Similarly, at most  $\epsilon m$ of colours from $C$ can be used at vertex $y_2$. 
 Assume $ \tau m$ colours from $A$ are used at vertex $y_3$.
 Since altogether we use $5m$ colours to colour these five vertices, we conclude
 $\tau+ 4\epsilon+4 \ge 5$. Hence
 $$\tau \ge  1-4\epsilon.$$
 
 As  $m$ colours from $C \cup E$ are used by $x_1$, we know that  at most $\epsilon m$ colours of $C \cup E$ are used at vertex $x_2$. As at least 
 $(1-4\epsilon  )m$ of $A$ is used at $y_3$, it follows that 
 at most $4\epsilon m$ colours of $A$ are used at vertex $x_2$. 
 
 So at least $(1-5\epsilon)m$ colours from $B$ are used at vertex $x_2$. 
 
 At most $\epsilon m$ colours from $C$ are used at vertex $z_2$. At most $5\epsilon   m$ colours from $B$ are used at vertex $z_1$. At most $4\epsilon  m$ colours from $A$ are used at vertex $z_5$. Each colour from $D \cup E$ is used at most twice among vertices $z_1,z_2,z_3,z_4,z_5$. Assume $\sigma m$ colours from $C$ are used at vertex $z_4$. Then $\sigma+12\epsilon  + 4 \ge 5$. Hence $$\sigma \ge 1-12\epsilon.$$
 
 Therefore, at most $4\epsilon m$ colours from $A$ can be used at vertex $w$, at most $12 \epsilon m$ colours from $C$ can be used at vertex $w$ and at most $\epsilon m$ colours from $E$ can be used at vertex $w$. So the total number of colour available to $w$ is at most  $  17 \epsilon m.$  
 Since $\epsilon < \frac 1{17}$, we arrive at a contradiction.
\end{proof}

 It can be verified that for the list assignment $L$ defined above, if $E$ is a set of $\lceil \frac m {17} \rceil$ colours, then there is an $L$-colouring of $H$.

 Let $p=\binom{3m+\epsilon m}{m}$, and let $G$ be obtained from the disjoint union of $p^2$ copies of $H$ by identifying all the copies of $u$ into a single vertex (also named as $u$) and all the copies of $v$ into a single vertex (also named as $v$). It is obvious that $G$ is a triangle free planar graph.
 
 Now we show that $G$ is not $(3m+\epsilon m, m)$-choosable. Let $X$ and $Y$ be two disjoint sets of $3m+\epsilon m$ colours. Let $L(u)=X$ and $L(v)=Y$. There are $p^2$ possible $m$-fold $L$-colourings of   $u$ and $v$. Each such a colouring $\phi $ corresponds to one copy of $H$. In that copy of $H$, define the list assignment as in the proof of Lemma \ref{lem1}, by replacing $A$ with $\phi (u)$ and $B$ with $\phi (v)$. Now Lemma 2 implies that no $m$-fold colouring of $u$ and $v$ can be extended to an $m$-fold $L$-colouring of $G$. This completes the proof of Theorem \ref{main}. 
 
 \section{Some open questions}
 
We define the choice number of a class ${\cal G}$ of graphs as 
$ch({\cal G}) = \max \{ch(G): G \in {\cal G}\}$. Then $ch({\cal Q}) = 4$. The strong fractional choice number of graphs is intended to provide a finer scale for measuring the choosability of graphs.

It was conjectured by Erd\H{o}s,
 Rubin and Taylor \cite{ERT} that if a graph $G$ is $(a, b)$-choosable, then for any positive integer $m$, $G$ is
 $(am, bm)$-choosable. If this conjecture were true, then $ch_f^*(G) \le ch (G)$ for any graph $G$. However, very recently, this conjecture is refuted by Dvo\v{r}\'{a}k,  Hu and   Sereni \cite{DHS} who proved that for any integer $k \ge 4$, there is a $k$-choosable graph which is not $(2k,2)$-choosable. Nevertheless, for a graph $G$ with $ch(G)=k$, to show that  $ch_f^*(G) \le k$, it suffices to show that for any integer $m \ge 1$, $G$ is $(km+1,m)$-choosable. So it is still possible that the inequality $ch_f^*(G) \le ch (G)$ holds for every graph $G$.
 
 As triangle free planar graphs are $3$-degenerate,  
 we know that for any positive integer $m$, every triangle free planar graph $G$ is $(4m,m)$-choosable.
 So $ch_f^*({\cal Q}) \le 4$.  A challenging problem is to determine the value of $ch_f^*({\cal Q})$.

  \begin{question}
  	\label{q0}
  What is the value of $ch_f^*({\cal Q})$?
  \end{question}

 Question \ref{q0} may be very difficult. The following question is easier, but also remains open. 
 
 \begin{question}
 	\label{q1}
 	Is there a positive integer $m$ such that every triangle free planar graph is $(4m-1,m)$-choosable?
 \end{question}
 
 If such an $m$ exists, then the smallest possible value of $m$ is $2$. 
 
 \begin{question}
 	\label{q2}
 	Is it true that every triangle free planar graph is $(7,2)$-choosable?
 \end{question}

 It is known \cite{XZ} that for any finite graph $G$, $ch_f^*(G)$ is a rational number. Which rational numbers are the strong fractional choice number of graphs remains an open question. The question restricted to planar graphs and triangle free planar graphs are also interesting.

 \begin{question}
 	\label{q3}
 	Which rational numbers are the strong fractional choice number of  planar graphs? Which rational numbers are the strong fractional choice number of triangle free planar graphs? 
 \end{question}

\end{document}